\newcommand{\Xvf}{\mathfrak{X}}
\newcommand{\xb}{{\bf x}}
\newcommand{\R}{\ensuremath{\mathbb{R}}}
\newcommand{\N}{\ensuremath{\mathbb{N}}}
\begin{document}

\catchline{}{}{}{}{} % Publisher's Area please ignore

\markboth{Y. Xia, M. Gra\v si\v c, W. Huang and V. Romanovski}{Limit Cycles  in a Model of  Olfactory Sensory Neurons}

\title{Limit Cycles  in a Model of  Olfactory Sensory Neurons}

\author{YONGHUI XIA}

\address{Department of Mathematics, Zhejiang Normal University, \\ Jinhua, 321004, P. R. China\\
xiadoc@163.com}

\author{MATEJA  GRA\v{S}I\v{C}}
\address{Faculty of Natural Science and Mathematics, University of Maribor\\
SI-2000 Maribor, Slovenia\\
Institute of Mathematics, Physics and Mechanics\\
SI-1000 Ljubljana, Slovenia\\
mateja.grasic@um.si}
\author{WENTAO HUANG\footnote{Author for correspondence}}
\address{School of Science, Guilin University of Aerospace Technology, \\ Guilin, 541004, P.R. China\\
huangwentao@163.com}
\author{VALERY  G. ROMANOVSKI}
\address{Faculty of Natural Science and Mathematics, University of Maribor\\
SI-2000 Maribor, Slovenia\\
Faculty of Electrical Engineering and Computer Science, University of  Maribor,\\
SI-2000 Maribor, Slovenia\\
Center for Applied Mathematics and Theoretical Physics\\
SI-2000 Maribor, Slovenia\\
valery.romanovsky@uni-mb.si}
\maketitle

\begin{history}
\received{(to be inserted by publisher)}
\end{history}

\begin{abstract}
We propose an approach to study small  limit cycle bifurcations on a center manifold  in analytic or smooth  systems depending
 on parameters. We then apply it to the investigation of limit cycle bifurcations in a model of
 calcium oscillations in the cilia of olfactory sensory neurons
 and show that it can have two limit cycles: a
stable cycle  appearing after a Bautin (generalized Hopf)
bifurcation and an unstable cycle appearing
after a subcritical Hopf bifurcation.
\end{abstract}

\keywords{Oscillations;\:limit cycle;\:bifurcation;\:biochemical network.}

%\begin{multicols}{2}
\section{Introduction}
\noindent In this paper, we consider the mathematical model for calcium oscillations in the cilia of olfactory sensory neurons proposed in  \cite{RBS}.

The model involves three species, a cyclic-nucleotide-gated (CNG$^o$) channel, calcium $Ca^{2+}$, and calmodulin $CaM4$, which we denote by $A_1$, $A_2$ and $A_3$, respectively,
and six reactions:
\begin{equation} \label{net}
\begin{aligned}
0\xrightarrow{K_1} A_1,\ A_1\xrightarrow{K_2} A_1+A_2,\ 4A_2 \xrightarrow{K_3} A_3,\\
A_3\xrightarrow{K_4} 4A_2,\ A_1+A_3\xrightarrow{K_5} A_3,\ A_2\xrightarrow{K_6} 0,
\end{aligned}
\end{equation}
where $K_1,\dots, K_6$ are the reaction rates.
As is usual in such  schemes,  zero on the left-hand side means that the reaction is
a source, where certain substances are introduced into the system. Zero
on  the right-hand side means that the reaction  is a sink,
 where certain substances are removed
from the system.
Let us denote the concentrations of $A_1$, $A_2$ and $A_3$ by $X$, $Y$ and $Z$, respectively.

In \cite{RBS} by  using the mass action kinetics (see e.g. \cite{F})  and the generalized mass action kinetics,
the three-dimensional  system of differential equations
\begin{align} \label{sys0}
\begin{split}
      \dot{X} = &  K_1- K_5 X Z,
         \\
      \dot{Y} =  & K_2 X - 4 K_3 Y^2 +4 K_4 Z-K_6 Y^\epsilon
     \\
     \dot{Z}= & K_3 Y^2 - K_4 Z,
\end{split}
\end{align}
 associated with the network \eqref{net} is
derived.
%where it is assumed that all parameters of the system are positive.
Since the generalized mass action kinetics were used for the last reaction, the
corresponding term in the second equation is not $-K_6 Y$, but   $-K_6 Y^\epsilon$,
 where the effective exponent $\epsilon$ corresponds  to the extrusion of $Ca^{2+}$ from cilium by pumps and exchangers.

In practice, the rate constants $K_i$
are not usually known, so one of the main tasks
in the investigation of chemical reaction networks
 is  to ask whether the
resulting differential system  has  the capacity to
admit certain kinds of qualitative behavior, among them, the most
important are the behavior  near  steady states and the oscillatory
behavior.  That is, it is important to know
 whether  rate constant values such that the differential
system  resulting from a presumed chemistry admits
behavior of a specified kind can even exist. In this way, one can
determine whether a postulated chemistry taken with
mass action kinetics  can be  observed (see  \cite{F} for
more details).

%Typical biochemical  reaction models derived using the  mass action law involve polynomial differential
%equations, like the first and second  equations in \eqref{sys0}, but in some
%models the terms with non-integer exponents, like the term $-K_6 Y^\epsilon$ (where the effective exponent $\epsilon$ correspond to extrusion of $Ca^{2+}$ from cilium by pumps and exchangers), are presented.

Since  biochemical  reaction models derived using the  mass action law
are  represented by polynomial or analytical   differential equations
involving  many parameters (reaction rates),  even  the determination
of stationary states and their stability analysis become   extremely  difficult  problems, which are
 usually unfeasible for general values of  parameters, even in the case of polynomial models.
The search for  limit cycles, which describe auto-oscillatory regimes,
  is a much more difficult
problem than the investigation of  singular points.
Because of the complexity of the  biochemical reaction models,
the study of limit cycles  in such  models seldom goes
beyond the determination of possible Hopf bifurcations
(even without verifying the  transversality, or crossing, condition)
 and mostly such
bifurcations are found numerically for heuristically chosen values of parameters,
although in recent years some symbolic computation algorithms for detection of
Hopf bifurcations have been developed
(see e.g.  \cite{E,E1,NW,NW1,SturmWeberEtAl2009a}).

If a smooth system of autonomous differential  equations admits  a
two-dimensional  center manifold,
then it is possible to study not only Hopf bifurcations, but also the so-called
Bautin bifurcations,  or degenerated Hopf bifurcations on the center manifolds, see e.g.  \cite{Kuz,Farr}.
To perform the study of such bifurcations one can use one of the following six
methods known in the literature: The method of Poincar\'e-Birkhoff  normal
forms,  the method of Lyapunov quantities (constants), the method of the succession function, the
method of averaging,  the method of intrinsic harmonic balancing, and the Lyapunov-Schmidt
method (see e.g. \cite{Farr} for a nice review of the methods). Although  from a theoretical
point of view all of them allow performing a complete bifurcation analysis, in practice,
they require extremely laborious computations, so the computational efficiency
becomes an important issue. It appears that  the most efficient method
 from the computational point of view is the method
of Lyapunov quantities,  used in  \cite{BL,S}, since
it involves only collection of similar terms in polynomial
expressions and solving systems of linear algebraic equations.

%one face   hese methods allow  It appears among them t

However, the method of Lyapunov quantities used in \cite{S}
and other works involves the search for positively defined Lyapunov
functions. In this paper, we  propose a generalization of the method
for the case when the Lyapunov function is semi-positively defined,
that is, the quadratic form defined by the lowest part of the Lyapunov function
has one zero eigenvalue and the other eigenvalues are positive.
We then  apply the method to study the degenerate Hopf  bifurcations in the system \eqref{sys0} and show that the system can have two limit cycles as the result
of such bifurcations.

The paper is organized  as follows.  In Section 2, we describe  an approach
to study limit cycle bifurcations using a Lyapunov function on the center manifold.
In Section 3, we study singular points of  system \eqref{sys0}. In the last
section, we use the approach proposed in Section 2 to study limit cycles of  system \eqref{sys0}.
In particular, it is shown there that the system can have two limit cycles bifurcating from
a singular point, and numerical examples are provided confirming the existence of two
limit cycles.

%In this paper we propose an efficient computation approach  a systematic search for periodic solutions
%using  symbolic computation.

%In recent years some symbolic computation algorithms for detections possible Hopf bifurcations based on the Hurwitz??? criterion  have been developed, see,  for instance,  \cite{Erami,SturmWeberEtAl2009a,Wang}.

\section{Limit cycle bifurcations on the center manifold}
Consider a three-dimensional system of the form
\begin{align}\label{3ds}
 \dot \xb = A \xb+ F(\xb) = G(\xb),
\end{align}
 where $\xb=(x,y,z)$,  the matrix $A$ has the eigenvalues $\lambda_1, \lambda_2, \lambda_3  $
 and $ \lambda_1 <0$,  $\lambda_2= i \omega $,  $\lambda_3= -i \omega $,
  $F$ is a vector-function, which is analytic in a neighborhood of the origin,
 and such that its series expansion starts  from quadratic or higher terms, and $G(\xb)=(G_1(\xb), G_2(\xb),G_3(\xb))^T$.

Since two eigenvalues of  system \eqref{3ds}  are purely imaginary and the third one has
the real part  different from zero, according to the Center Manifold
Theorem \cite{Chi}, the system  has a center manifold   defined by a function  $x=f(y,z)$.
After a linear transformation and rescaling of time,  system \eqref{3ds} can be written in the form
\begin{equation} \label{5}
\begin{aligned}
\dot u &=      \,\;\;   -v + P(u,v,w) = \widetilde P(u,v,w) \\
\dot v &=  \,\,\;\;\;\;  u + Q(u,v,w) = \widetilde Q(u,v,s) \\
\dot w &=       -\lambda w + R(u,v,w) = \widetilde R(u,v,w),
\end{aligned}
\end{equation}
where $\lambda $ is a positive real number and $P, Q, R$, are power series  without constant and linear terms
which are convergent in a neighborhood of the origin.

% (if $\lambda$ is negative, then changing the direction of time and interchanging$u$ and $v$  we still  have a  system of the form \eqref{5} with positive $\lambda$).

%Since two eigenvalues of system \eqref{5} are pure imaginary by the Center Manifold
%Theorem \cite{?} the system  has a center manifold  $W^c$ defined by a function  $w=f(u,v)$.

%Since system \eqref{5} has a center manifold $W^c$ and $\lambda >0$,
%the trajectories in a small neighborhood of the origin tend to the trajectories
%on  the center manifold as time increases.
%By the Reduction Principle (see e.g. \cite{Bib})

%The problem of determining the
%dynamical behavior on $W^{c}$, that is, distinguishing between a center and a focus on the center manifold for a quadratic polynomial system of the form \eqref{5} was studied in \cite{Edneral}.

Since   system \eqref{5} is analytic,   for every
$r \in \N$ there exists in a sufficiently small neighborhood of the origin a $C^r$ invariant
manifold $W^c$, the local center manifold, that is tangent to the $(u,v)$-plane at the origin, and
which contains all the recurrent behavior of  system \eqref{5} in a
neighborhood of the origin in $\R^3$ (\cite[\S 4.1]{Chi}, \cite{Sij}).
 For system
\eqref{5}, the phase portrait in a neighborhood of the origin on
$W^{c}$ can be, depending on the  nonlinear terms $P$, $Q$  and $R$, either a
center, in which case every trajectory (other than the origin itself) is an
oval surrounding the origin, or a focus, in which case, every trajectory
spirals towards the origin or every trajectory spirals away from the origin
as the time increases.

According to the
Lyapunov theorem,  for
  system \eqref{5} with the  corresponding vector field
$$
\Xvf = \widetilde P \tfrac{\partial}{\partial u}
     + \widetilde Q \tfrac{\partial}{\partial v}
     + \widetilde R \tfrac{\partial}{\partial w}
$$
the origin is a center for $\Xvf | W^c$ if and only if $\Xvf$ admits a real analytic local first
integral of the form
\begin{align} \label{Int}
\Phi(u,v,w) = u^2+v^2  + \sum_{j+k+\ell = 3}^\infty \phi_{jk\ell} u^j v^k w^\ell
\end{align}
 in a neighborhood of the origin in $\R^3$.
Moreover,  when a center exists, the local center manifold $W^c$ is unique and  analytic (see  \cite[\S 13]{Bib}).

%If we are interesting in the  behavior of trajectories on the center manifold of system \eqref{5}
%we can find  an initial string of the Taylor expansion of the manifold looking
%for it in the form $w=a_1 u+ a_2 v +\dots $,
%then plug in the expansion into the first two equations of system \eqref{5}
%and then study the center focus problem for the obtained  two-dimensional system.
%However computationally more efficient way is provided by the Lyapunov Center Theorem.

For system \eqref{5},  one can  look for a function  $\Phi(u,v,w)$ of the form \eqref{Int}
such that
\begin{align} \label{ur}
%\frac{ \partial \Phi}{\partial u}\tilde P + \frac{ \partial \Phi}{\partial v} \tilde Q +\frac{ \partial \Phi}{\partial w} \tilde R =
\Xvf \Phi= \sum_{i=1}^\infty g_i (u^2+v^2)^{i+1}.
\end{align}
In the case of the two-dimensional system (when in \eqref{5} $\widetilde{R}\equiv 0$)
it is well-known  that it  is  possible to find
functions $\Phi$ and $g_i$ satisfying \eqref{ur}.
If the right hand-sides of \eqref{5} are functions depending on parameters,
then $\phi_{jk\ell}$ and $g_i$ also depend on the parameters of the system.
If, for some values of parameters, all $g_i$ vanish, then the corresponding  system \eqref{5}
 has
a center at the origin, but if, for some values of parameters, not all $g_i$ vanish,then by the
 Lyapunov stability theorem (see e.g. \cite{Bib,RS}), the singular point
at the origin is a stable focus if the first non-zero $g_i$ is negative, and it is
an unstable focus if the first non-zero $g_i$ is positive (since $\Phi$ is a  positively defined Lyapunov function
with negatively and positively defined derivatives, respectively).
If the first non-zero coefficient in \eqref{ur} is  $g_i$, then perturbing
the systen  in such way that $\vert g_{k-1}\vert \ll \vert g_k\vert$ and the signs of $g_s$ alternate
we obtain $i-1$ limit cycles bifurcated from the origin of the system \cite{S}.

%Obstacles for the fulfillment of \eqref{ur}  give us the necessary conditions for the existence of a first integral of the form \eqref{Int} for system \eqref{5}.

In the following theorem, we show that a similar approach can be applied
to study bifurcations of limit cycles on the center manifold of  three-dimensional systems
\eqref{3ds}. Although it is possible to transform  system \eqref{3ds} to a system of the form \eqref{5},
in the case when the matrix $A$ in  system \eqref{3ds} depends on parameters,
such transformation usually involves  expressions containing radicals,
so then the radicals will also appear in the coefficients of  system \eqref{5}.
It will  slow down computations of function \eqref{Int}  and focus quantities
$g_i$ in \eqref{ur} sharply. For this reason, we do not transform system \eqref{3ds} to system \eqref{5},
but work with  system \eqref{3ds}, for which we assume
that the function $G$  depends on parameters $\alpha$, $\alpha=(\alpha_1,\dots, \alpha_m).$
%Let \be \label{qfg}
%q(x) =\sum_{k+l+m=2} a_{klm} x^k y^lz^m \ee
%be a quadratic form.

\begin{theorem} \label{teo2}
Suppose  that for  system \eqref{3ds}
there exists  a polynomial
\begin{align} \label{Psi}
\Psi({\bf x}) = \sum_{j+l+m=2}^s  a_{jlm} x^j y^lz^m
\end{align}
such that
\begin{multline}
 \label{vPhi}
\Xvf(\Psi) : =  \tfrac{\partial \Psi(x)}{\partial x} G_1(\xb)
     +  \tfrac{\partial \Psi(\xb) }{\partial y}  G_2(\xb)
     +  \tfrac{\partial \Psi (\xb)}{\partial z} G_3=\\
     g_1 (y^2+z^2)^2+g_2 (y^2+ z^2)^3 +\dots+ g_{n-1} (y^2+ z^2)^n+O(||\xb||^{2n+1} ) .
\end{multline}
Let
 \begin{align} \label{xfy}
 x=f(y,z,\alpha^*)
\end{align}
  be the center manifold of  system \eqref{3ds} corresponding to the
  value  $\alpha^*$ of parameters of the system and
\begin{align} \label{qfg}
q({\bf x},\alpha^*) =\sum_{j+l+m=2} a_{jlm} x^j y^lz^m
\end{align}
be  the quadratic part of \eqref{Psi}.
Let $q_1(y,z,\alpha^*)$ be $q({\bf x},\alpha^*)$ evaluated on \eqref{xfy}.
Assume that   $q_1(y,z,\alpha^*)$
is a  positively defined quadratic form  and
\begin{align} \label{cgk}
g_1(\alpha^*)=g_2(\alpha^*)=\dots = g_k(\alpha^*)=0, \qquad g_{k+1}(\alpha^*)\neq 0,
\end{align}
where $k< n-1$.
Then, \\
 1)
If $ g_{k+1}(\alpha^*) <0$, the corresponding system \eqref{3ds} has a stable focus at the origin  on the center manifold,
and   if $ g_{k+1}(\alpha^*) >0$,  then the focus is unstable. \\
 2)
If it is possible to choose    perturbations of the parameters $\alpha$ in the system \eqref{3ds},
such that
\begin{align} \label{gper}
|g_1(\alpha_k)| \ll |g_2(\alpha_{k-1})|\ll \dots \ll |g_k(\alpha_1)|\ll  | g_{k+1}(\alpha^*)|,
\end{align}
$\alpha_{j+1}$ is arbitrarily close to $\alpha_j$ and  the signs of $g_s(\alpha_m)$ in \eqref{gper} alternate, then  system \eqref{3ds}
corresponding to the parameter  $\alpha_k$ has at least k limit cycles on the center manifold.
\end{theorem}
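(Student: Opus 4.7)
The plan is to reduce the problem to the classical two-dimensional Lyapunov-quantities setting by restricting the entire structure to the center manifold, on which the semi-positive-definite function $\Psi$ becomes genuinely positive definite, and then to apply the standard stability and Bautin-type bifurcation argument of \cite{S} to the resulting planar system.

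First I would set $\widetilde{\Psi}(y,z):=\Psi(f(y,z,\alpha^*),y,z)$, the restriction of $\Psi$ to the center manifold. Since $f(y,z,\alpha^*)$ vanishes to first order at the origin, the degree-two part of $\widetilde{\Psi}$ coincides with the quadratic form $q_1(y,z,\alpha^*)$, which is positive definite by hypothesis; hence $\widetilde{\Psi}$ is a positive definite Lyapunov function in a neighborhood of the origin in $W^c$. Because $W^c$ is invariant, the derivative of $\widetilde{\Psi}$ along the reduced planar flow equals $\Xvf(\Psi)$ evaluated on $W^c$; substituting $x=f(y,z,\alpha^*)=O(\|(y,z)\|^2)$ into \eqref{vPhi} and noting that the terms $g_i(y^2+z^2)^{i+1}$ do not involve $x$, I would obtain
$$\dot{\widetilde{\Psi}}(y,z) \;=\; \sum_{i=1}^{n-1} g_i(\alpha^*)(y^2+z^2)^{i+1} \;+\; O\bigl(\|(y,z)\|^{2n+1}\bigr).$$

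For part 1), the hypothesis \eqref{cgk} reduces this to $\dot{\widetilde{\Psi}}=g_{k+1}(\alpha^*)(y^2+z^2)^{k+2}+O(\|(y,z)\|^{2k+5})$, whose sign in a punctured neighborhood of the origin is exactly that of $g_{k+1}(\alpha^*)$; the classical Lyapunov stability theorem, applied to the reduced planar system with positive definite Lyapunov function $\widetilde{\Psi}$, then yields a stable focus when $g_{k+1}(\alpha^*)<0$ and an unstable one when $g_{k+1}(\alpha^*)>0$. For part 2), I would first invoke smooth $\alpha$-dependence of $W^c$ (the parametric Center Manifold Theorem, valid for $\alpha$ close to $\alpha^*$): the focus quantities $g_i(\alpha)$ vary continuously, $q_1(y,z,\alpha)$ remains positive definite, and the Lie derivative retains the expansion
$$\dot{\widetilde{\Psi}}(y,z;\alpha) \;=\; \sum_{i=1}^{n-1} g_i(\alpha)(y^2+z^2)^{i+1} \;+\; O\bigl(\|(y,z)\|^{2n+1}\bigr).$$
Given the hierarchy \eqref{gper} and the alternation of signs of $g_s(\alpha_m)$, I would choose radii $0<r_1<\dots<r_{k+1}$, each roughly of the form $r_j\sim\sqrt{|g_{k+1-j}/g_{k+2-j}|}$, such that on the circle $y^2+z^2=r_j^2$ the term indexed by $k+1-j$ dominates both the lower-index terms (by the sharp inequalities in \eqref{gper}) and the higher-index terms together with the remainder (by the smallness of $r_j$). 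Consequently $\dot{\widetilde{\Psi}}$ alternates sign on these circles, so the reduced flow crosses the boundaries of the annuli $\{r_j\le\sqrt{y^2+z^2}\le r_{j+1}\}$ in opposite senses; Poincar\'e--Bendixson then delivers at least one periodic orbit inside each of the $k$ annuli, yielding $k$ limit cycles on the center manifold.

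The main obstacle I anticipate is the quantitative step just sketched: verifying that the radii $r_j$ dictated by \eqref{gper} can be taken small enough to lie in the neighborhood where the asymptotic expansion of $\dot{\widetilde{\Psi}}$ is valid, yet spaced widely enough that on each circle the chosen dominant term truly swamps all other contributions from \eqref{vPhi} together with the remainder. The sharp inequalities in \eqref{gper} and the requirement that $\alpha_{j+1}$ be arbitrarily close to $\alpha_j$ are precisely what permit such a choice: they let the radii shrink together with the perturbations while preserving the correct sign of the leading term on each scale. Once this bookkeeping is in place, the remainder is a routine application of two-dimensional Lyapunov theory and Poincar\'e--Bendixson to the center-manifold reduction.
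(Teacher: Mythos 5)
Your proof follows essentially the same route as the paper: restrict $\Psi$ to the center manifold, where positive definiteness of $q_1$ makes the restriction a genuine Lyapunov function whose derivative along the reduced flow is $\sum_i g_i(y^2+z^2)^{i+1}+O(\|(y,z)\|^{2n+1})$, which gives part 1) by the Lyapunov theorem; then produce $k$ nested closed curves with alternating crossing directions and apply Poincar\'e--Bendixson in each annulus for part 2). The only organisational difference is that the paper perturbs iteratively ($\alpha^*\to\alpha_1\to\dots\to\alpha_k$), at each step keeping the previously constructed ovals (intersections of the cylinders $\Psi=c$ with the center manifold, whose crossing direction persists under the small perturbation) and adding a smaller oval of opposite crossing, whereas you pass by continuity to the single final parameter $\alpha_k$ and exploit the hierarchy of $g_1(\alpha_k),\dots,g_{k+1}(\alpha_k)$ all at once; both bookkeepings are legitimate. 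Two small corrections are needed in your write-up: (i) a definite sign of $\dot{\widetilde\Psi}$ on a circle $y^2+z^2=r_j^2$ controls the crossing of the level sets of $\widetilde\Psi$, not of the circle itself, so as stated your annuli bounded by circles need not be trapping regions; take instead level curves of $\widetilde\Psi$ (or of $\Psi$ restricted to $W^c$, as in the paper) lying inside the thin rings where your dominance estimates hold --- since $q_1$ is positive definite these level curves are ovals comparable to your circles, so the fix is immediate; (ii) the center manifold $x=f(y,z,\alpha)$ is in general not tangent to the $(y,z)$-plane (in the paper's application $f$ has a nonzero linear part), so the claim $f=O(\|(y,z)\|^2)$ is unwarranted, but only $f=O(\|(y,z)\|)$ is needed both for your remainder estimate and for identifying the quadratic part of $\widetilde\Psi$ with $q_1$, so this slip is harmless.
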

 \begin{proof}
1) Since
% Let  \begin{align} \label{xfy}
% x=f(y,z,\alpha^*) \end{align}
%  be the center manifold of system \eqref{s1} corresponding to
% the parameter $\alpha^*$.  Substituting \eqref{xfy} into  \eqref{Psi}
$q_1$ is positively defined,
 the function   $\Psi$ restricted to the center manifold is positively  defined
 in a small neighborhood of the origin.
 The
   derivative of $\Psi$  with respect
 to the vector field on the center manifold has the same sign as  $ g_{k+1}(\alpha^*)$.
Thus, by  the Lyapunov theorem, the  origin is a stable focus
on the center manifold if  $ g_{k+1}(\alpha^*)<0$, and an unstable focus if $ g_{k+1}(\alpha^*)>0$.

 2)
 Assume for determinacy   that  $g_{k+1}(\alpha^*)<0$.
 Under the condition of the theorem, the equality
 $ \Psi({\bf x},\alpha^*)=c$ ($c\in (0,  c_1]$) defines, in a small neighborhood of the origin
 near the center manifold \eqref{xfy},
  a family of cylinders which are
 transversal to the center manifold. Let $C_1$ be the curve formed by the intersection
 of the cylinder $\Psi ({\bf x},\alpha^*)=c_1$ and the center manifold $M(\alpha^*)$ of  system \eqref{3ds},
 defined by \eqref{xfy}. If $c_1$ is sufficiently small, then $C_1$ is
 an oval on $M(\alpha^*)$ and the vector field is directed inside $C_1$,
 %(except two points which are the intersections of $C_1$ and the plane $z=0$),
 since
 $$
 \Xvf (\Psi ({\bf x},\alpha^*))= g_{k+1}(\alpha^*) (y^2+z^2)^{k+2}+h.o.t
 $$
 and $ g_{k+1}(\alpha^*)<0.$
  By the assumption  of the theorem,
 there is an  $\alpha_1$ arbitrarily close to $\alpha^*$ and such that $g_k(\alpha_1)>0$.
 Then, for some $c_2<c_1$ the intersection of the cylinder   $ \Phi({\bf x},\alpha_1)=c_2$ ($c_2\in (0,  c_1]$)
 defines a  curve $C_2$  on the center manifold $x=f(y,z,\alpha_1)$,
such that the vector field of  system \eqref{3ds} is directed outside of $C_2$ (since $g_k(\alpha_1)>0$).
Since the perturbation is arbitrarily small, the curve $C_1$ is transformed to a curve $C_1^{(1)}$, such that
the vector field on  $C_1^{(1)}$  is still     directed inside  the curve.
Then, according to the Poincar\'e-Bendixson theorem, there is a limit cycle on the center manifold
$x=f(y,z,\alpha_1)$ in the ring bounded by $C_2$ and $C_1^{(1)}$.
Continuing the procedure on the center manifold corresponding to  a parameter $\alpha_k$
 we obtain $k$ curves $C_1^{(k)}$, $C_2^{(k-1)}, \dots, C_k$, such that
 the vector field on   $C_1^{(k)}$ is directed inside the curve,
 the vector field on  $C_2^{(k-1)}$  is directed outside of the curve,
 the vector  field on   $C_3^{(k-2)}$ is  directed inside the curve, and so on.
 Then, in each ring bounded by the curves $C^{(j)}_i$,  system \eqref{3ds} corresponding to the parameter
 $\alpha_k$ has at least one limit cycle
 on the center manifold  $x=f(y,z,\alpha_k)$.
 \end{proof}

\begin{corollary}
If  condition  \eqref{cgk} holds, then the origin of  system
\eqref{s1} is asymptotically stable if  $ g_{k+1}(\alpha^*) <0$, and it is unstable
if $ g_{k+1}(\alpha^*) >0$.
\end{corollary}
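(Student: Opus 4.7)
The plan is to obtain the corollary as a direct consequence of part~1) of Theorem~\ref{teo2} combined with the standard reduction principle for center manifolds. First, I would invoke part~1) of the theorem verbatim: under the hypothesis \eqref{cgk}, the origin is a stable focus of the restricted vector field $\Xvf|W^c$ when $g_{k+1}(\alpha^*)<0$, and an unstable focus when $g_{k+1}(\alpha^*)>0$. This is the content that has already been established via the Lyapunov function $\Psi$ restricted to the center manifold $x=f(y,z,\alpha^*)$.

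Second, I would lift this conclusion from the two-dimensional flow on $W^c$ to the full three-dimensional system \eqref{3ds} by appealing to the reduction principle of Pliss--Shoshitaishvili (see e.g.\ the discussion of center manifolds in \cite{Chi}). The hypothesis on the spectrum of $A$ is $\lambda_1<0$, $\lambda_{2,3}=\pm i\omega$, so there is a one-dimensional strong stable manifold transverse to $W^c$ along which trajectories are attracted exponentially to $W^c$. Consequently, the topological type of the origin for the full system \eqref{3ds} coincides with its topological type for $\Xvf|W^c$. In the case $g_{k+1}(\alpha^*)<0$, trajectories on $W^c$ spiral toward the origin while orbits off $W^c$ approach $W^c$ exponentially, yielding asymptotic stability in $\mathbb{R}^3$. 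In the case $g_{k+1}(\alpha^*)>0$, the invariance of $W^c$ already produces orbits on $W^c$ that leave any sufficiently small neighborhood of the origin, which by itself certifies instability of the origin for the ambient system.

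The main (and essentially only) obstacle is a cosmetic one: one must be careful to formulate the reduction principle in the correct regularity class. The center manifold is only guaranteed to be $C^r$ for arbitrary finite $r$, not $C^\infty$; but since the Lyapunov function $\Psi$ is polynomial and the focal order computations require only finitely many derivatives, any sufficiently large $r$ is enough. Apart from this point, no further computation is needed beyond what is already present in the proof of Theorem~\ref{teo2}, so the corollary follows in one line after the reduction principle is invoked.
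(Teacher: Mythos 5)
Your argument is correct and is essentially the paper's own proof: the paper likewise cites the Reduction Principle of Pliss (see \cite{P,GH}) to transfer the stability of the origin on the center manifold, as established in part~1) of Theorem~\ref{teo2}, to the full system. Your additional remarks on the $C^r$ regularity of the center manifold and on instability via invariance of $W^c$ are harmless elaborations, not a different route.
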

\begin{proof}
By the Reduction Principle \cite{P,GH}, the stability of the origin of a system
\eqref{s1} is the same as the stability of the singular point at the origin
on the center manifold.
\end{proof}

\section{Singular points of system \eqref{sys0}}
To simplify the study of  singular points and limit cycles of  system \eqref{sys0}, we introduce
  dimensionless variables  performing the substitution
$$
X_1 = K_2  X, \quad Y_1 =  Y, \quad Z_1= K_5 Z,
$$
which transforms \eqref{sys0} into the system

\begin{align} \label{sys}
\begin{aligned}
      \dot{X_1} = & \ k_1- X_1 Z_1  ,
         \\
      \dot{Y_1} =  & \ X_1 -4 k_3 Y_1^2 + \frac{4 k_4}{k_5} Z_1-k_2 Y_1^\epsilon
     \\
     \dot{Z_1}= & \ k_3 k_5  Y_1^2 - k_4 Z_1,
\end{aligned}
\end{align}
where $k_1=  K_1 K_2 $, $k_2= K_6,$
$k_3= K_3$,   $k_4=K_4$ and $k_5=K_5$.

Thus, without  loss of generality, instead of  system \eqref{sys0} we will study
system \eqref{sys}. Since $X$, $Y$, $Z$ in \eqref{sys0} are concentrations of the
species and $K_i$ are reaction rates, all parameters $k_i$ in \eqref{sys} are positive,
and we are interested in the behavior of trajectories of \eqref{sys}
in the domain $X_1 >0$, $Y_1 >0$, $Z_1 >0.$

%It is impossible to find stationary  points of system \eqref{sys}  for general values of parameters, so,

In order to simplify computations,
we assume that  system \eqref{sys} has a  stationary  point  in  the plane $y=1$.
It happens when
\begin{align} \label{k1}
k_1  = \frac{k_2  k_3 k_5 }{k_4},
\end{align}
 and then the unique stationary point of system \eqref{sys} is the point $P(X_1^{(0)}, Y_1^{(0)}, Z_1^{(0)})$ with the coordinates
$
 X_1^{(0)} = k_2 , \quad Y_1^{(0)}=1,\quad   Z_1^{(0)}  = \frac { k_3 k_5 }{k_4}.
$
Moving the origin to the point $P$ using the substitution
$
x=X_1- X_1^{(0)}, y= Y_1- Y_1^{(0)},  z=Z_1- Z_1^{(0)}
 $
 we obtain the system
\begin{align} \label{s1}
\begin{aligned}
      \dot{x} = &  -\frac{k_3 k_5}{k_4} x - k_2  z - x z  ,
         \\
      \dot{y} =&\  k_2  + x - 8 k_3  y -4 k_3  y^2 - k_2  (1 + y)^\epsilon
      + \frac{4 k_4}{k_5} z  ,
     \\
     \dot{z}= & \  2 k_3 k_5  y  -k_4 z   + k_3 k_5   y^2.
\end{aligned}
\end{align}
The Jacobian of the matrix of the  linear approximation of  system \eqref{s1} at the origin is
$$
A=\left(
\begin{array}{ccc}
 -\frac{k_3 k_5 }{k_4 } & 0 & -k_2
   \\
 1 & -\epsilon k_2-8 k_3 & \frac{4
   k_4 }{k_5 } \\
 0 & 2 k_3 k_5  & -k_4  \\
\end{array}
\right).
 $$
 The eigenvalues of $A$ are roots of a cubic polynomial and have rather complicated expression.
 To simplify calculations, we impose the condition that one of the eigenvalues is $-1$. To find
 this condition, we calculate the characteristic polynomial of $A$
 obtaining
 \begin{eqnarray*}
 p=
 -\frac{1}{k_4}(2 k_2  k_3 k_4 k_5  + \epsilon k_2  k_3 k_4 k_5  + \epsilon k_2  k_4^2 u +
   \epsilon k_2  k_3 k_5  u + 8 k_3^2 k_5  u + \\k_3 k_4 k_5  u + \epsilon k_2  k_4 u^2 +
   8 k_3 k_4 u^2 + k_4^2 u^2 + k_3 k_5  u^2 + k_4 u^3).
 \end{eqnarray*}
Then, the condition $p|_{u=-1}=0$ gives
 \begin{align} \label{k3}
k_2 = \frac{(-1 + 8 k_3 + k_4) (-k_4 + k_3 k_5 )}{
2 k_3 k_4 k_5  - \epsilon (-1 + k_4) (k_4 - k_3 k_5 )}.
 \end{align}

\begin{proposition} Assume that for system \eqref{s1} $\epsilon >0$ and  condition \eqref{k3} is fulfilled.
Then, the system  has a center manifold passing through the origin $O$,
with the stationary   point $O$ being a  center or a focus at the center manifold
if and only if
\begin{align}\label{cond}
k_3>0\land k_4 >0\land k_5 >0\land 8
   k_3<1\land 8 k_3+k_4 <1\land 8
   k_3 k_4 +k_3
   k_5 +k_4 ^2<k_4 .
\end{align}
\end{proposition}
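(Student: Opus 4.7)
The plan is to analyze the spectrum of the Jacobian $A$ of system \eqref{s1} at the origin and single out the case in which it consists of $-1$ together with a pair of purely imaginary conjugates, since this is the spectral picture needed for a two-dimensional center manifold carrying center-or-focus dynamics. Condition \eqref{k3} was imposed precisely so that $-1$ is a root of the characteristic polynomial $p(u)$ displayed in the paper, so dividing $p$ by $(u+1)$ gives a factorization
\begin{equation*}
p(u)=-(u+1)\bigl(u^2+\alpha u+\beta\bigr),
\end{equation*}
and by matching coefficients against the explicit form of $p$ one reads off
\begin{equation*}
\alpha=\epsilon k_2+8k_3+k_4+\frac{k_3 k_5}{k_4}-1,\qquad \beta=k_2 k_3 k_5(2+\epsilon).
\end{equation*}

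The quadratic factor has a purely imaginary pair of roots if and only if $\alpha=0$ and $\beta>0$. Since $k_2,k_3,k_5,\epsilon>0$ the inequality $\beta>0$ is automatic, so the whole analysis collapses onto the trace condition
\begin{equation*}
\epsilon k_2=1-8k_3-k_4-\frac{k_3 k_5}{k_4}=\frac{k_4-8k_3k_4-k_4^2-k_3k_5}{k_4}.
\end{equation*}
The right-hand side must be strictly positive because $\epsilon,k_2>0$, and that positivity is exactly the last inequality $8k_3k_4+k_3k_5+k_4^2<k_4$ of \eqref{cond}. Substituting the expression \eqref{k3} for $k_2$ and requiring $k_2>0$ forces the two numerator factors $(8k_3+k_4-1)$ and $(k_3k_5-k_4)$ in \eqref{k3} to share a common sign; combined with the positivity already extracted this sign must be negative, yielding $8k_3+k_4<1$ and $k_3k_5<k_4$. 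Together with the standing positivity $k_3,k_4,k_5>0$ and the automatic consequence $8k_3<1$, these inequalities reproduce \eqref{cond} exactly.

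The main obstacle will be the reverse implication: given \eqref{cond}, one has to exhibit a positive $\epsilon$ for which the equation $\alpha=0$ is compatible with \eqref{k3} and with $k_2>0$. Substituting \eqref{k3} into $\alpha=0$ produces an equation linear in $\epsilon$, and the key algebraic fact to verify is that under \eqref{cond} its leading coefficient is nonzero; this reduces to the elementary inequality $(1-8k_3-k_4)k_4+k_3k_5(1-k_4)>0$, both of whose terms are positive by \eqref{cond}. With that in hand the resulting $\epsilon$ is positive, the denominator of \eqref{k3} is positive, and $k_2>0$ and $\beta>0$ hold automatically, so all requirements for a center-or-focus on the two-dimensional center manifold are met. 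Beyond this careful sign bookkeeping I do not anticipate any deeper analytic difficulty.
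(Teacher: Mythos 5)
Your argument is correct, and it reaches \eqref{cond} by a genuinely different and more elementary route than the paper. The paper's proof computes the eigenvalues of $A$ explicitly, writes the complex pair through the unwieldy expressions \eqref{alpha}--\eqref{b}, solves the condition of vanishing real part for $\epsilon$ to get \eqref{ep}, and then obtains \eqref{cond} by handing the semialgebraic system $k_1>0\land k_2>0\land k_3>0\land k_4>0\land k_5>0\land b<0\land\epsilon>0$ to \texttt{Reduce} in {\sc Mathematica}, i.e.\ by quantifier elimination, so the proof is essentially a black-box computation. You instead use that \eqref{k3} is exactly $p(-1)=0$, divide out $(u+1)$, and reduce everything to trace/determinant conditions on the quadratic factor $u^2+\alpha u+\beta$: your $\alpha$ and $\beta$ check out against the displayed characteristic polynomial, and $\beta=k_2k_3k_5(2+\epsilon)>0$ is automatic, which replaces the paper's discriminant condition $b<0$; hence everything collapses to $\epsilon k_2=1-8k_3-k_4-k_3k_5/k_4>0$, i.e.\ to the last inequality of \eqref{cond}. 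Two small points of bookkeeping: the remaining inequalities of \eqref{cond} follow at once by dividing $8k_3k_4+k_3k_5+k_4^2<k_4$ by $k_4$ (giving $8k_3+k_4<1$ and hence $8k_3<1$, as well as $k_4<1$ and $k_3k_5<k_4$), which is more direct than your sign argument on the numerator of \eqref{k3}; that argument silently needs the denominator of \eqref{k3} to be positive, which does follow from $k_4<1$, $k_3k_5<k_4$ and $\epsilon>0$, so it can be repaired, but the division route avoids the issue. For the converse, clearing denominators in $\epsilon k_2=1-8k_3-k_4-k_3k_5/k_4$ with $k_2$ from \eqref{k3} gives $\epsilon\,(k_4-k_3k_5)\bigl[(1-8k_3-k_4)k_4^2+k_3k_5(1-k_4)\bigr]=2k_3k_4k_5\bigl(k_4-8k_3k_4-k_4^2-k_3k_5\bigr)$; your bracket is missing a factor $k_4$ on its first term, which is harmless since only its sign is used, and under \eqref{cond} every factor is positive, so a unique $\epsilon>0$ exists, after which the denominator of \eqref{k3}, $k_2$, $k_1$ and $\beta$ are automatically positive, exactly as you say. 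In terms of what each approach buys: the paper's route also produces the explicit formulas \eqref{alpha} and \eqref{ep}, which it reuses later for the transversality discussion in the Hopf argument, but its verification of \eqref{cond} cannot be checked by hand; your factorization argument is short, hand-checkable, and makes transparent that \eqref{cond} is nothing more than positivity of the parameters together with its last inequality, the first two nontrivial inequalities being redundant.
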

\begin{proof}
Computing the eigenvalues of matrix $A$ we find that they are
$\lambda_1=-1, \ \lambda_{2,3}=\alpha \pm  \beta, $
where
\begin{equation} \label{alpha}
\begin{aligned}
\alpha= & -\frac{a}{2 k_4 (-\epsilon k_4 + \epsilon k_4^2 + \epsilon k_3 k_5  -
       2 k_3 k_4 k_5  - \epsilon k_3 k_4 k_5 )},
\end{aligned}
\end{equation}
\begin{align} \label{b}
\begin{aligned}
\beta = \frac{\sqrt{b}}{2 k_4 k_5  (-\epsilon k_4 + \epsilon k_4^2 +
     \epsilon k_3 k_5  - 2 k_3 k_4 k_5  - \epsilon k_3 k_4 k_5 )}
     \end{aligned}
\end{align}
with
$$\begin{aligned}
a =& -\epsilon k_4^3 + 8 \epsilon k_3 k_4^3 + \epsilon k_4^4 - \epsilon k_3 k_4 k_5  + 2 k_3 k_4^2 k_5  +
     2 \epsilon k_3 k_4^2 k_5  - 16 k_3^2 k_4^2 k_5  -\\& 8 \epsilon k_3^2 k_4^2 k_5  -
     2 k_3 k_4^3 k_5  - \epsilon k_3 k_4^3 k_5  + \epsilon k_3^2 k_5 ^2 - 2 k_3^2 k_4 k_5 ^2 -
      \epsilon k_3^2 k_4 k_5 ^2
\end{aligned}$$
and
$$\begin{aligned}
b= \ & k_5 ^2 (4 k_3^2 k_4^2 k_5 ^2 (2 (1 + 8 k_3) k_4^3 + k_4^4 +
          k_4^2 (-3 + 64 k_3^2 - 2 k_3 (-8 + k_5 )) +
          \\ & 2 (1 - 8 k_3) k_3 k_4 k_5  + k_3^2 k_5 ^2) +
       \epsilon^2 (k_4^4 - k_4^3 (1 + k_3 (-8 + k_5 )) - 8 k_3^2 k_4^2 k_5  - \\ &
          k_3^2 k_5 ^2 + k_3 k_4 k_5  (1 + k_3 k_5 ))^2 +
       4 \epsilon k_3 k_4 k_5  (-k_4^6 + k_3 k_4^5 (-16 + k_5 ) - k_3^3 k_5 ^3 + \\ &
          k_3^3 k_4 k_5 ^2 (8 + k_5 ) -
          k_3 (-1 + 8 k_3) k_4^2 k_5  (3 + 2 k_3 k_5 ) +
          k_4^4 (3 + 16 k_3^2 (-4 + k_5 ) +\\ & 2 k_3 k_5 ) +
          2 k_4^3 (-1 + k_3 (8 - 3 k_5 ) + 32 k_3^3 k_5  -
             k_3^2 (-8 + k_5 ) k_5 ))).
\end{aligned}$$

Thus, the matrix $A$ can have a pair of purely  imaginary eigenvalues
if and only if $\alpha=0$. Solving the latter equation for
$\epsilon $ we obtain
\begin{align} \label{ep}
\epsilon= -\frac{2 k_3 k_4 k_5  (-k_4 + 8 k_3 k_4 + k_4^2 + k_3 k_5 )}{(-k_4 + k_3 k_5 ) (-k_4^2 +
    8 k_3 k_4^2 + k_4^3 - k_3 k_5  + k_3 k_4 k_5 )}.
\end{align}
 Now, solving the semialgebraic system
$$k_1>0\land k_2>0\land k_3>0\land
   k_4 >0\land k_5 >0\land b<0\land
   \epsilon >0$$
with \texttt{Reduce}
 % \cite{Col}
  of  {\sc Mathematica}, we obtain that the solution is given
by inequalities \eqref{cond}.
\end{proof}

%{Remark}.
%% (for more details about %the model see \cite{Szed}).
%It is also possible to
%introduce the dimensionless variables performing the substitution
%$$
%X_1 = K_2 K_3 X, \quad Y_1 = 4 K_3 Y, \quad Z_1= K_5 Z,
%$$
%which transforms \eqref{sys0} into the system
%\be \label{syss}
%\begin{aligned}
%      \dot{X_1} = &  k_1- X_1 Z_1  ,
%         \\
%      \dot{Y_1} =  &  X_1 - Y_1^2 + \frac{k_4}{k_3} Z_1-k_2 Y_1^\epsilon
%     \\
%     \dot{Z_1}= & k_3 Y_1^2 - k_4 Z_1,
%\end{aligned}
%\ee
%where $k_1= 4 K_1 K_2 K_3$, $k_2= K_6 4^{1 - \epsilon} K_3^{1 - \epsilon},$
%$k_3=\frac{K_5}{16 K_3}$ and  $k_4=K_4$.
%In this case the  system has only 4 parameters, however then in a  neighborhood of
%the  singular point
%with the coordinate $y=1$ there is no oscillations.
%
\section{Limit cycles of  system \eqref{sys0}}
In this section we study the limit cycle bifurcations of  system \eqref{s1}.
%To simplify the study,  from now on we assume that for system \eqref{s1} condition \eqref{k1} is  fulfilled.
The system \eqref{s1} is not a polynomial system, so we
expand the function on the right-hand side of the second equation
of \eqref{s1} into a power series up to the third order, obtaining the  system

\begin{align} \label{s2}
\begin{aligned}
 \dot{x} = &  -\frac{k_3 k_5}{k_4} x - k_2  z - x z  ,
         \\
      \dot{y} =&\  x -(\epsilon k_2 + 8 k_3) y + \left(\frac{\epsilon k_2}2 - \frac{\epsilon^2 k_2}2 -
    4 k_3\right) y^2 \\
    & + \left(- \frac{\epsilon k_2}3 + \frac{\epsilon^2 k_2}2 -
    \frac{\epsilon^3 k_2}6\right) y^3 +
    \frac{
 4 k_4 z}{k_5} +\dots ,
     \\
     \dot{z}= & \  2 k_3 k_5  y  -k_4 z   + k_3 k_5   y^2,
\end{aligned}
\end{align}
where the dots stand for terms of the order higher than three.

Using a linear transformation, it is possible
to transform  system \eqref{sys} to a system of the form \eqref{5} (with $\lambda =-1$)
and then study its limit cycle bifurcations using the normal form theory. However, then the coefficients of the obtained system will contain radical expressions, which will essentially slow down
symbolic computations with {\sc Mathematica} (or another computer algebra system).
So, instead of transforming  \eqref{s1} to a system of the form
 \eqref{5} and then applying the normal form theory,
we use the way provided by Theorem  \ref{teo2}.
Using this approach, we look
  for  function
\eqref{Psi} satisfying \eqref{vPhi}, where now  $G_1, G_2, G_3$ are the right-hand sides of
\eqref{s2}.
% but for system \eqref{s2} we look for a  function
%\be \label{Int1}
%\Psi(u,v,w) = a x^2+ b y^2 +c y^2 +d x y +e x z +f y z
% + \sum_{j+k+\ell = 3} \psi_{jk\ell} x^j y^k z^\ell,
%\ee
%such that
%\be \label{vF}
%\Xvf(\Psi)=g_1 x^4+g_2 x^6 +\dots,
%\ee
%under the assumption
%that $\epsilon$ is defined by \eqref{ep} ($\Xvf$ in \eqref{vF} is the vector field of system \eqref{s2}).

The  computational procedure  to find the first $m$  polynomials $g_i$
is  as follows.

1.  Write down the initial string of \eqref{Psi} up to order $2m$,
$ \Psi_{2m}(x,y,z) = q({\bf x})  + \sum_{j+k+\ell = 3}^{2m} a_{jkl} x^j y^k z^l. $

2. For each $i=3,\dots, 2m$ equate coefficients of terms of  order $i$ in
the expression
\begin{align} \label{phim}
F_{2m}=\frac{ \partial \Psi_{2m}}{\partial x} G_1+ \frac{ \partial \Psi_{2m}}{\partial y}  G_2+\frac{ \partial \Psi_{2m}}{\partial z} G_3   -g_1(y^2+z^2)^2-\dots - g_{2m} (y^2+z^2)^{2m}
\end{align}
to zero, obtaining $2m-1$ systems of linear variables in unknown variables $a_{jkl}$, and
$g_1, \dots, g_{m}$.

3. Look for solutions of the obtained linear systems  consequently, starting from
systems that correspond to $i=2$. Each linear system that corresponds to odd $i=2 i_0-1$  has
a unique solution with respect to unknown  $a_{jk\ell}$. After solving the system   (for instance, with the command \texttt{Solve}
in {\sc Mathematica}),   substitute the obtained values of $a_{jkl}$ to the linear systems
that correspond to $i>2 i_0-1$. For systems that correspond to even $i=2 i_0$, consider the linear
system as a system in unknowns  $a_{jkl}$ and $g_{i_0}$. In this case, one of
 $a_{jkl}$ can be chosen arbitrarily.
 After   solving the system,     assign the value 1 to the undefined  $a_{jkl}$ if $i_0$=2, or assign the value
  0 for the  undefined  $a_{jkl}$ if $i_0>2$, then   substitute the obtained values of $a_{jkl}$ to the linear systems
that correspond to $i>2 i_0$. In this step the quantity  $g_{i_0-1}$ is computed.

The calculations using the procedure described above (we did them with  {\sc Mathematica}) yield the polynomial $g_1$ given in Appendix 1.

Let us denote  $k=(k_3,k_4,k_5)$.
%From  the Hopf theorem (see e.g. Theorem 3.4.2 in \cite{GH}) and Theorem \ref{teo2} we immediately
%have the following statement.
\begin{theorem}
If for  system \eqref{s1}  conditions \eqref{k3} and \eqref{ep}  are fulfilled, and for some fixed values $k^*=(  k_3^* , k_4^*, k_5 ^*)$ satisfying \eqref{cond}
%\be \label{cond_qv}
%k_3>0\land k_5 >0\land 8 k_3+k_4 <1\land 8 k_3 k_4 +k_3 k_5 +k_4 ^2<k_4
%\ee
  $g_1(k^*)<0$, then the system  has a stable focus at the origin  on the center manifold,
and   if $g_1(k^*)>0$, then the focus is unstable. Moreover, if
at least one of the functions  $\frac{\partial g_1(k)}{\partial k_3}, \
\frac{\partial g_1(k)}{\partial k_4}, \frac{\partial g_1(k)}{\partial k_5}$ is different from
zero for $ k=k^*$, then the system undergoes a subcritical Hopf bifurcation
if  $g_1(k^*)<0$,  and a supercritical Hopf bifurcation
if $g_1(k^*)>0$.
\end{theorem}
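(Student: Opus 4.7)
The plan is to split the argument into two parts: the stability claim follows from Theorem \ref{teo2}(1) applied on the center manifold, and the Hopf bifurcation claim follows from the classical planar Andronov--Hopf theorem after center-manifold reduction. The polynomial $\Psi$ produced by the algorithm of Section 2 plays the role of the Lyapunov function in both parts, with $g_1$ serving as the first focus quantity.

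For the stability assertion, the one non-trivial prerequisite for Theorem \ref{teo2}(1) is to verify that $q_1(y,z,k^*)$, the restriction of the quadratic part $q(\mathbf{x},k^*)$ of $\Psi$ to the center manifold $x=f(y,z,k^*)$, is positive definite. Since $f$ vanishes to first order at the origin, this restriction agrees to leading order with $q(0,y,z,k^*)$, and positive definiteness reduces, via Sylvester's criterion, to two polynomial inequalities in $k_3,k_4,k_5$. I would discharge these by symbolic computation (Reduce in Mathematica, as used earlier in the paper), showing that they follow from \eqref{k3}, \eqref{ep} and \eqref{cond}. With that in place, Theorem \ref{teo2}(1) immediately yields asymptotic stability or instability of the origin on the center manifold according to the sign of $g_1(k^*)$.

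For the Hopf bifurcation, at $k^*$ we have the classical Hopf configuration: complex-conjugate eigenvalues $\pm i\omega$ and a hyperbolic eigenvalue $\lambda_1=-1$. By the Reduction Principle used in the Corollary I restrict to the two-dimensional center manifold; on it, the first Lyapunov focus quantity is a positive multiple of $g_1(k^*)$, the multiplier being positive precisely by the positive-definiteness just verified. The Andronov--Hopf theorem then produces a unique small limit cycle provided (i) the first Lyapunov quantity is non-zero, which is our assumption $g_1(k^*)\ne 0$, and (ii) the real part $\alpha$ of the complex eigenvalues crosses zero transversally along some one-parameter path through $k^*$. For (ii) I would differentiate \eqref{alpha} and note that it is a non-constant rational function of $(k_3,k_4,k_5,\epsilon)$, so a path with $\partial\alpha\ne 0$ is always available. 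The hypothesis that some $\partial g_1/\partial k_i\ne 0$ at $k^*$ is then used to select such a path within parameter space along which $g_1$ retains the asserted sign, making the bifurcation non-degenerate; its sub- or supercritical character, and the stability of the emerging cycle, is read off from the sign of $g_1(k^*)$ as stated.

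The main obstacle I anticipate is the positive-definiteness check: the coefficients $a_{jkl}$ coming out of the algorithm are rational functions of $(k_3,k_4,k_5)$ whose numerators and denominators grow quickly, so the Sylvester inequalities are heavy for symbolic elimination under the semi-algebraic constraint \eqref{cond}. A more conceptual, secondary point is pinning down the exact positive multiplier relating the algorithmic $g_1$ to the classical first Lyapunov focus quantity, so that the announced correspondence between the sign of $g_1(k^*)$ and the sub- or supercritical character of the Hopf bifurcation matches the sign conventions used throughout the paper.
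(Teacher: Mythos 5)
Your overall architecture (stability via Theorem \ref{teo2}(1) plus a center-manifold/Hopf argument for the bifurcation, with $g_1$ as the first focus quantity) matches the paper's, but there is a genuine gap in the one step you yourself identify as the crux: the positive-definiteness check. You argue that since ``$f$ vanishes to first order at the origin,'' the restriction of $q$ to the center manifold agrees to leading order with $q(0,y,z,k^*)$. That is false in the coordinates actually used. The paper deliberately does \emph{not} block-diagonalize the linear part of \eqref{s1}/\eqref{s2} (to avoid radicals), so the center eigenspace is not the $(y,z)$-coordinate plane and the local center manifold $x=h(y,z)$ has non-trivial \emph{linear} part; indeed the paper computes
\begin{equation*}
x=\frac{k_4-k_3k_5}{k_4}\,y-\frac{(k_4-k_3k_5)(k_4^2+k_3k_5)}{2k_3k_4^2k_5}\,z+\text{h.o.t.},
\end{equation*}
and under \eqref{cond} one has $k_3k_5<k_4$, so these coefficients do not vanish. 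Consequently the quadratic form whose positive definiteness Theorem \ref{teo2} requires is $q$ evaluated on this linear approximation of the manifold, i.e. $q(c_1y+c_2z,\,y,\,z)$, not $q(0,y,z)$; since $q$ is only semi-definite (at best) on $\R^3$, its restrictions to different planes through the origin can have different signatures, so checking $q(0,y,z)$ proves nothing about the form that matters. The paper's proof fills exactly this step: it computes the quadratic part \eqref{qq}, computes the first terms of the center-manifold expansion from $\dot x-\dot y\,\partial h/\partial y-\dot z\,\partial h/\partial z=0$, substitutes, and only then applies Sylvester's criterion (via \texttt{Reduce}) under \eqref{cond}. To repair your argument, replace the shortcut by this substitution; the rest of your outline (reduction principle, sign of $g_1$ determining stability, transversality of $\alpha$ from \eqref{alpha} for the Hopf part) is consistent with, and no less rigorous than, the paper's own brief appeal to Theorem 3.4.2 of \cite{GH} together with Theorem \ref{teo2}.
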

\begin{proof}
Calculations using the procedure described above yield that, for  system \eqref{s2},
 the quadratic part of
function \eqref{Psi} is
\begin{align} \label{qq}
 \begin{aligned}
q(x,y,z)=&-\frac{k_4}{(-1 + 8 k_3) k_4^2 + k_4^3 - k_3 k_5  + k_3 k_4 k_5 }x^2 -\\& \frac{
 2 k_3 k_5 }{(-1 + 8 k_3) k_4^2 + k_4^3 - k_3 k_5  + k_3 k_4 k_5 } x y + y^2 -\\& \frac{k_4^2 (-1 + 8 k_3 + k_4)}{ k_3 k_5  ((-1 + 8 k_3) k_4^2 + k_4^3 - k_3 k_5  + k_3 k_4 k_5 )} x z -\\& \frac{-k_4 + k_4^2 +
    k_3 k_5 }{ k_3 k_4 k_5 } y z + \frac{k_4 - (1 + 8 k_3) k_4^2 - k_3 k_5  + k_3 k_4 k_5 }{
 4 k_3^2 k_4 k_5 ^2} z^2.
\end{aligned}
\end{align}

We look for the center manifold in the form
\begin{align} \label{hcm}
x=h(y,z)
\end{align}
Then the function $h$ is computed from the  equation
$$
\dot x- \dot y \frac{\partial h}{\partial y}- \dot z \frac{\partial h}{\partial z}=0,
$$
where the left-hand side is evaluated for $x$, defined by \eqref{hcm}.

Computing the first  two  terms of the series expansion of the center manifold we find
$$
x=   \frac{k_4 -k_3 k_5 }{k_4}y -\frac{(k_4 - k_3 k_5) (k_4^2 + k_3 k_5)}{2 k_3 k_4^2 k_5}z+h.o.t.
$$
We substitute the obtained expression into \eqref{qq}, and
using the  Sylvester criterion with \texttt{Reduce} of Mathematica, verify that, if
condition \eqref{cond} holds, then the quadratic approximation of the obtained
expression is a positively defined quadratic form.
% Moreover, under this condition the
%cylinders $\Psi(x,y,z)=c$ are transversal to the plane $x=0$ for small $c$.
Thus,  from  the Hopf theorem (see e.g. Theorem 3.4.2 in \cite{GH}) and Theorem \ref{teo2} we obtain
that the conclusion of the theorem holds.
\end{proof}

Since we were unable to compute the quantity $g_2$
for general parameters $k_3$ and $k_4$ at our computational facilities,
in order to simplify the further analysis we set
\begin{align} \label{k3k4}
k_3 = k_4 = \frac{1}{10}.
\end{align}
Then, from \eqref{cond}, we obtain that  $0<k_5<0.1$.
The only root of the polynomial $g_1$ in this interval is
$\bar k_5\approx 0.05147292$,
and $g_1$ is strictly increasing on this interval. The plot of $g_1$ on this interval is given in Fig. 1.
%\begin{figure}[!h]
%\centering
%\includegraphics{g1.jpeg}
%includegraphics{cicles.jpg}
%\hspace{0.8cm}
%\includegraphics[scale=.30]{2integrs2.eps}
%\caption{...
% Trajectories of system \eqref{suv} with parameters \eqref{par_k} andthe initial conditions  $U=0$ and $V= 0.25$, $ 0.15$ and $0.07$.  }
%\label{fig1}
%\end{figure}

%We were not able to compute  the quantity  $g_2$ at our computational facilities
%for general parameters $k_3$ and $k_4$, so

The quantity $g_2$
 computed  for the values of parameters
given by \eqref{k3k4} is  given in Appendix 2.

%From the plot in Fig. 1 we see that

\begin{theorem}
There are systems \eqref{s1} with two  limit cycles in a
neighborhood  of the singular point at the origin.
\end{theorem}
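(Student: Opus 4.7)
The plan is to combine the mechanism of Theorem \ref{teo2} with an additional small perturbation that breaks the pure-imaginary-eigenvalue condition \eqref{ep}, producing one limit cycle of Bautin (generalized Hopf) type together with a nested second cycle from a subcritical Hopf bifurcation, matching the scenario announced in the abstract.

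First I would take as base point $k^*=(1/10,1/10,\bar k_5)$ on the slice \eqref{k3k4}, where by construction $g_1(k^*)=0$. The explicit formula for $g_2$ from Appendix 2 is a rational function of $k_5$, so evaluating its numerator at $\bar k_5$ (symbolically, via a resultant with the defining polynomial of $\bar k_5$, or rigorously by Sturm isolation plus interval arithmetic) should show that $g_2(k^*)\neq 0$. Fix the sign by assuming $g_2(k^*)<0$; the other case is symmetric. Since the preceding section has already verified that $k^*$ satisfies \eqref{cond} and that the restriction of the quadratic form \eqref{qq} to the center manifold \eqref{hcm} is positive definite, the hypotheses of Theorem \ref{teo2} hold with $k=1$. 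Because $g_1(k_5)$ is strictly monotone on the admissible interval, one may pick a parameter $\alpha_1$ arbitrarily close to $k^*$ with $g_1(\alpha_1)>0$ and $|g_1(\alpha_1)|\ll |g_2(k^*)|$; Theorem \ref{teo2} then produces a stable limit cycle on the center manifold, which is the outer, Bautin-type cycle.

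Second, I would introduce a further perturbation transverse to \eqref{ep}, nudging $\epsilon$ (or equivalently the real part $\alpha$ in \eqref{alpha}) so that the complex pair acquires a small negative real part. This is the second independent direction in the standard Bautin unfolding. The outer cycle constructed above is hyperbolic and so persists. The geometric argument in the proof of Theorem \ref{teo2} can then be re-run with one additional, much smaller cylinder $\{\Psi=c_3\}$ placed strictly inside the outer cycle: on this innermost cylinder the linear part dominates and the vector field points inward (because the origin has become a weak stable focus), while just inside the outer cycle the field points outward. Poincar\'e--Bendixson applied to the annulus between these two curves yields a second, inner limit cycle of opposite stability.

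The main obstacle is not conceptual but symbolic: certifying $g_2(k^*)\neq 0$ requires controlling a concrete algebraic number $\bar k_5$ together with the rather large expression from Appendix 2, so a resultant or a certified interval computation is needed rather than a plain numerical evaluation. A secondary technical point is that Theorem \ref{teo2} is stated with \eqref{ep} held fixed, while the second cycle uses a parameter transverse to that surface; however, the nested-cylinder argument underlying Theorem \ref{teo2} adapts verbatim once an additional innermost transverse cylinder is inserted, so no essentially new dynamical input is required. The numerical examples promised at the end of the section can then be used to exhibit concrete parameter values for which both limit cycles are visible simultaneously.
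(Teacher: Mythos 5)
Your proposal is correct and follows essentially the same route as the paper: fix the slice \eqref{k3k4}, use $g_1(\bar k_5)=0$ with $g_2(\bar k_5)<0$ and Theorem \ref{teo2} to produce the stable outer cycle by perturbing $k_5$ so that $g_1>0$, and then break the pure-imaginary condition (the paper invokes the Hopf theorem via the transversality of $\alpha$ in \eqref{alpha}, where you re-run the nested-cylinder/Poincar\'e--Bendixson argument) to obtain the inner unstable cycle. The only refinement beyond the paper is your suggestion to certify $g_2(\bar k_5)\neq 0$ symbolically, whereas the paper settles for the numerical value $g_2(\bar k_5)\approx -0.554882$.
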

\begin{proof}
Since $g_1(k_5) $ is an increasing function on $(0,0.1)$,
if  $0< k_5<\bar k_5 $,
then the singular point at the center manifold is a stable focus, and if    $\bar k_5<k_5<0.1 $,
then it is an unstable focus. If $k_5=\bar k_5$ then $g_2(\bar k_5) \approx -0.554882 <0$ and, therefore, the singular
point is a stable focus. Thus, according to Theorem \ref{teo2},  after the perturbation of $k_5$  in a neighborhood of
$k_5=\bar k_5$
in such a way that $g_1$ becomes positive,  a stable limit cycle is born at the center manifold.

Since, after such perturbation, the real part of the eigenvalue is still zero and
the transversality condition for $\alpha$ defined by \eqref{alpha}  is satisfied, one more limit cycle is born as
the result of the Hopf bifurcation.
\end{proof}

\begin{figure}[h]
\centering
\includegraphics[scale=0.35]{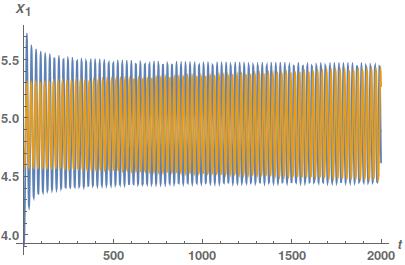}
\hspace{0.5cm}
\includegraphics[scale=0.35]{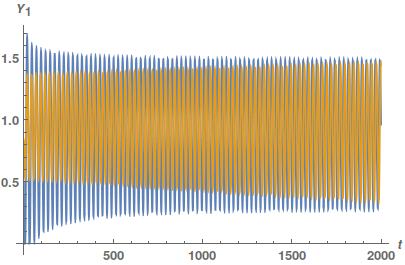}
\hspace{0.5cm}
\includegraphics[scale=0.35]{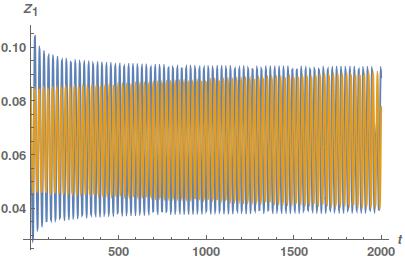}
\caption{Solutions $X_1(t)$, $Y_1(t)$ and $Z_1(t)$ of the initial problems $X_1(0)=3.9267, Y_1(0)=0.5, Z_1(0)=0.085$  and $X_1(0)=4.7267, Y_1(0)=0.8, Z_1(0)=0.085$ of the system \eqref{sys} for parameters $k^*$ given
in \eqref{k*} converge to a stable limit cycle when $t \rightarrow \infty$.}
\label{fig1}
\end{figure}

Example 1.
The existence of a stable  limit cycle of  system \eqref{sys}
for the values of parameters { \begin{equation} \label{k*}
k^*= (k_1 , k_2 , k_3, k_4, k_5 , \epsilon)=(0.320238, 4.92673, 0.1, 0.1, 0.065, 0.0071041)
\end{equation}}
is evident from Figure \ref{fig1}. Since for these values of parameters the real part of the eigenvalues
of the singular point at the origin of system \eqref{s1} is zero
and
$$
\frac{\partial \alpha}{\partial k_4}|_{k^*}\ne 0,
$$
where $\alpha$ is defined by \eqref{alpha}, an unstable limit cycle
can  appear from the singular points after the  Hopf bifurcation.

%\begin{figure}[!h]\centering
%\includegraphics[scale=.65]{fig2y.jpg}
%includegraphics{cicles.jpg}
%\hspace{0.8cm}
%\includegraphics[scale=.30]{2integrs2.eps}
%\caption{Graph of solutions $Y_1(t)$ of the system \eqref{sys} for parameters $k^*$ in \eqref{k*} and the initial conditions $(X_1, Y_1, Z_1)=( , , )$ and $( , , )$.
%Trajectories of system \eqref{suv} with parameters \eqref{par_k} andthe initial conditions  $U=0$ and $V= 0.25$, $ 0.15$ and $0.07$.
% }\label{fig2}
%\end{figure}

Example 2. In Figure \ref{fig3} we observe 2 limit cycles in  system \eqref{sys}
for the values of parameters
\begin{equation}\label{k**}
k^*= \{k_1 , k_2 , k_3, k_4, k_5 , \epsilon\}=\left\{0.320238, 4.92673, 0.1, 0.1, 0.065, 0.0072041\right\}
.
\end{equation}
The outer stable limit cycle is clearly visible.
Since for these values of parameters $\alpha$ defined by \eqref{alpha}
is negative, the singular point is stable.
Thus,  the trajectories
corresponding to the smallest ring in Figure \ref{fig3} tend to zero when time increases. It means that there is
an unstable limit cycle in the area between the smallest and middle rings in Figure \ref{fig3}.

\begin{figure}[h]
\centering
\includegraphics[scale=0.75]{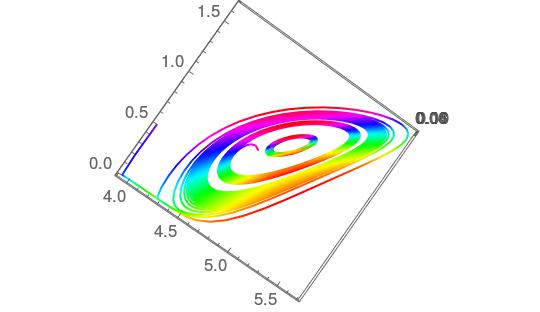}
%includegraphics{cicles.jpg}
%\hspace{0.8cm}
%\includegraphics[scale=.30]{2integrs2.eps}
\caption{
Two limit cycles in system  \eqref{sys} with parameters \eqref{k**}
and the initial conditions  $(X_1, Y_1, Z_1)=( , , )$, $( , , )$ and $( , , )$.
}
\label{fig3}
\end{figure}

To conclude, we have shown that for some values of parameters in  system
\eqref{sys0} not only a Hopf bifurcation occurs, but also degenerate
Hopf bifurcations occur, so there are systems in the family with two limit cycles.

\nonumsection{Acknowledgments} \noindent
Y. Xia was supported by the National Natural Science Foundation of China (grants No. 11671176 and No. 11271333), Natural Science Foundation of Zhejiang Province (grant No. LY15A010007),  the Scientific Research Funds of Huaqiao University and Natural Science Foundation of Fujian Province (grant No. 2018J01001), M. Gra\v si\v c by the Slovenian Research Agency  (program P1-0288, project J1-8133), W. Huang
by  the Natural Science Foundation of Guangxi, China (grant No. 2016GXNSFDA380031), V. G.  Romanovski   by the Slovenian Research Agency  (program P1-0306, project N1-0063). We thank Dr. Andre Zegeling for helpful remarks and  fruitful
discussions.

 \section*{Appendix 1}
{{
$
g_1 = -((-k_4^3 + 8 k_3 k_4^3 + k_4^4 - k_3 k_4 k_5  + k_3 k_4^2 k_5  +
        k_3^2 k_5 ^2) (-3 k_4^{14} + 288 k_3 k_4^{14} - 8320 k_3^2 k_4^{14} +
        112640 k_3^3 k_4^{14} - 798720 k_3^4 k_4^{14} + 2883584 k_3^5 k_4^{14} -
        4194304 k_3^6 k_4^{14} + 68 k_4^{15} - 3448 k_3 k_4^{15} +
        67904 k_3^2 k_4^{15} - 670208 k_3^3 k_4^{15} + 3485696 k_3^4 k_4^{15} -
        8912896 k_3^5 k_4^{15} + 8388608 k_3^6 k_4^{15} - 258 k_4^{16} +
        9992 k_3 k_4^{16} - 149248 k_3^2 k_4^{16} + 1073664 k_3^3 k_4^{16} -
        3702784 k_3^4 k_4^{16} + 4849664 k_3^5 k_4^{16}+ 416 k_4^{17}-
        12424 k_3 k_4^{17}+ 136000 k_3^2 k_4^{17}- 645120 k_3^3 k_4^{17}+
        1114112 k_3^4 k_4^{17}- 335 k_4^{18}+ 7384 k_3 k_4^{18}-
        53504 k_3^2 k_4^{18}+ 126976 k_3^3 k_4^{18}+ 132 k_4^{19}-
        1952 k_3 k_4^{19}+ 7168 k_3^2 k_4^{19}- 20 k_4^{20}+ 160 k_3 k_4^{20}-
        12 k_3 k_4^{12}k_5  + 1056 k_3^2 k_4^{12}k_5  - 25344 k_3^3 k_4^{12}k_5  +
        264192 k_3^4 k_4^{12}k_5  - 1277952 k_3^5 k_4^{12}k_5  +
        2359296 k_3^6 k_4^{12}k_5  + 342 k_3 k_4^{13}k_5  -
        15640 k_3^2 k_4^{13}k_5  + 270592 k_3^3 k_4^{13}k_5  -
        2287104 k_3^4 k_4^{13}k_5  + 9928704 k_3^5 k_4^{13}k_5  -
        20840448 k_3^6 k_4^{13}k_5  + 16777216 k_3^7 k_4^{13}k_5  -
        1679 k_3 k_4^{14}k_5  + 61408 k_3^2 k_4^{14}k_5  -
        872704 k_3^3 k_4^{14}k_5  + 6153216 k_3^4 k_4^{14}k_5  -
        22704128 k_3^5 k_4^{14}k_5  + 42336256 k_3^6 k_4^{14}k_5  -
        33554432 k_3^7 k_4^{14}k_5  + 3618 k_3 k_4^{15}k_5  -
        108416 k_3^2 k_4^{15}k_5  + 1242752 k_3^3 k_4^{15}k_5  -
        6830592 k_3^4 k_4^{15}k_5  + 18251776 k_3^5 k_4^{15}k_5  -
        19791872 k_3^6 k_4^{15}k_5  - 4135 k_3 k_4^{16}k_5  +
        98864 k_3^2 k_4^{16}k_5  - 860736 k_3^3 k_4^{16}k_5  +
        3260416 k_3^4 k_4^{16}k_5  - 4653056 k_3^5 k_4^{16}k_5  +
        2606 k_3 k_4^{17}k_5  - 46936 k_3^2 k_4^{17}k_5  +
        276992 k_3^3 k_4^{17}k_5  - 544768 k_3^4 k_4^{17}k_5  -
        852 k_3 k_4^{18}k_5  + 10400 k_3^2 k_4^{18}k_5  - 31744 k_3^3 k_4^{18}k_5  +
        112 k_3 k_4^{19}k_5  - 736 k_3^2 k_4^{19}k_5  - 18 k_3^2 k_4^{10}k_5 ^2 +
        1440 k_3^3 k_4^{10}k_5 ^2 - 27264 k_3^4 k_4^{10}k_5 ^2 +
        196608 k_3^5 k_4^{10}k_5 ^2 - 491520 k_3^6 k_4^{10}k_5 ^2 +
        690 k_3^2 k_4^{11}k_5 ^2 - 27624 k_3^3 k_4^{11}k_5 ^2 +
        400064 k_3^4 k_4^{11}k_5 ^2 - 2664448 k_3^5 k_4^{11}k_5 ^2 +
        8208384 k_3^6 k_4^{11}k_5 ^2 - 9437184 k_3^7 k_4^{11}k_5 ^2 -
        4370 k_3^2 k_4^{12}k_5 ^2 + 144784 k_3^3 k_4^{12}k_5 ^2 -
        1830336 k_3^4 k_4^{12}k_5 ^2 + 11218432 k_3^5 k_4^{12}k_5 ^2 -
        34574336 k_3^6 k_4^{12}k_5 ^2 + 48234496 k_3^7 k_4^{12}k_5 ^2 -
        16777216 k_3^8 k_4^{12}k_5 ^2 + 12134 k_3^2 k_4^{13}k_5 ^2 -
        342592 k_3^3 k_4^{13}k_5 ^2 + 3705152 k_3^4 k_4^{13}k_5 ^2 -
        19392000 k_3^5 k_4^{13}k_5 ^2 + 50798592 k_3^6 k_4^{13}k_5 ^2 -
        61997056 k_3^7 k_4^{13}k_5 ^2 + 33554432 k_3^8 k_4^{13}k_5 ^2 -
        18133 k_3^2 k_4^{14}k_5 ^2 + 426432 k_3^3 k_4^{14}k_5 ^2 -
        3743232 k_3^4 k_4^{14}k_5 ^2 + 15162368 k_3^5 k_4^{14}k_5 ^2 -
        28049408 k_3^6 k_4^{14}k_5 ^2 + 19922944 k_3^7 k_4^{14}k_5 ^2 +
        15526 k_3^2 k_4^{15}k_5 ^2 - 290456 k_3^3 k_4^{15}k_5 ^2 +
        1907840 k_3^4 k_4^{15}k_5 ^2 - 5130240 k_3^5 k_4^{15}k_5 ^2 +
        4718592 k_3^6 k_4^{15}k_5 ^2 - 7541 k_3^2 k_4^{16}k_5 ^2 +
        104208 k_3^3 k_4^{16}k_5 ^2 - 443648 k_3^4 k_4^{16}k_5 ^2 +
        557056 k_3^5 k_4^{16}k_5 ^2 + 1900 k_3^2 k_4^{17}k_5 ^2 -
        16960 k_3^3 k_4^{17}k_5 ^2 + 32768 k_3^4 k_4^{17}k_5 ^2 -
        188 k_3^2 k_4^{18}k_5 ^2 + 768 k_3^3 k_4^{18}k_5 ^2 -
        12 k_3^3 k_4^8 k_5 ^3 + 864 k_3^4 k_4^8 k_5 ^3 -
        11776 k_3^5 k_4^8 k_5 ^3 + 45056 k_3^6 k_4^8 k_5 ^3 +
        694 k_3^3 k_4^9 k_5 ^3 - 23400 k_3^4 k_4^9 k_5 ^3 +
        264960 k_3^5 k_4^9 k_5 ^3 - 1218560 k_3^6 k_4^9 k_5 ^3 +
        1933312 k_3^7 k_4^9 k_5 ^3 - 5874 k_3^3 k_4^{10}k_5 ^3 +
        169888 k_3^4 k_4^{10}k_5 ^3 - 1800832 k_3^5 k_4^{10}k_5 ^3 +
        8670208 k_3^6 k_4^{10}k_5 ^3 - 18333696 k_3^7 k_4^{10}k_5 ^3 +
        11534336 k_3^8 k_4^{10}k_5 ^3 + 21136 k_3^3 k_4^{11}k_5 ^3 -
        539104 k_3^4 k_4^{11}k_5 ^3 + 5157888 k_3^5 k_4^{11}k_5 ^3 -
        23117824 k_3^6 k_4^{11}k_5 ^3 + 48726016 k_3^7 k_4^{11}k_5 ^3 -
        40370176 k_3^8 k_4^{11}k_5 ^3 - 40377 k_3^3 k_4^{12}k_5 ^3 +
        885440 k_3^4 k_4^{12}k_5 ^3 - 7203712 k_3^5 k_4^{12}k_5 ^3 +
        26811392 k_3^6 k_4^{12}k_5 ^3 - 44810240 k_3^7 k_4^{12}k_5 ^3 +
        26214400 k_3^8 k_4^{12}k_5 ^3 + 44336 k_3^3 k_4^{13}k_5 ^3 -
        798872 k_3^4 k_4^{13}k_5 ^3 + 5100160 k_3^5 k_4^{13}k_5 ^3 -
        13600768 k_3^6 k_4^{13}k_5 ^3 + 12976128 k_3^7 k_4^{13}k_5 ^3 -
        28329 k_3^3 k_4^{14}k_5 ^3 + 391376 k_3^4 k_4^{14}k_5 ^3 -
        1725440 k_3^5 k_4^{14}k_5 ^3 + 2457600 k_3^6 k_4^{14}k_5 ^3 +
        10054 k_3^3 k_4^{15}k_5 ^3 - 94624 k_3^4 k_4^{15}k_5 ^3 +
        217088 k_3^5 k_4^{15}k_5 ^3 - 1724 k_3^3 k_4^{16}k_5 ^3 +
        8448 k_3^4 k_4^{16}k_5 ^3 + 96 k_3^3 k_4^{17}k_5 ^3 -
        3 k_3^4 k_4^6 k_5 ^4 + 192 k_3^5 k_4^6 k_5 ^4 - 1536 k_3^6 k_4^6 k_5 ^4 +
        346 k_3^4 k_4^7 k_5 ^4 - 9312 k_3^5 k_4^7 k_5 ^4 +
        73472 k_3^6 k_4^7 k_5 ^4 - 172032 k_3^7 k_4^7 k_5 ^4 -
        4322 k_3^4 k_4^8 k_5 ^4 + 104136 k_3^5 k_4^8 k_5 ^4 -
        853440 k_3^6 k_4^8 k_5 ^4 + 2750464 k_3^7 k_4^8 k_5 ^4 -
        2621440 k_3^8 k_4^8 k_5 ^4 + 20944 k_3^4 k_4^9 k_5 ^4 -
        463160 k_3^5 k_4^9 k_5 ^4 + 3682688 k_3^6 k_4^9 k_5 ^4 -
        12698624 k_3^7 k_4^9 k_5 ^4 + 17268736 k_3^8 k_4^9 k_5 ^4 -
        4194304 k_3^9 k_4^9 k_5 ^4 - 51788 k_3^4 k_4^{10}k_5 ^4 +
        1019000 k_3^5 k_4^{10}k_5 ^4 - 7254080 k_3^6 k_4^{10}k_5 ^4 +
        22577152 k_3^7 k_4^{10}k_5 ^4 - 28655616 k_3^8 k_4^{10}k_5 ^4 +
        8388608 k_3^9 k_4^{10}k_5 ^4 + 72244 k_3^4 k_4^{11}k_5 ^4 -
        1201240 k_3^5 k_4^{11}k_5 ^4 + 6972160 k_3^6 k_4^{11}k_5 ^4 -
        16416768 k_3^7 k_4^{11}k_5 ^4 + 12451840 k_3^8 k_4^{11}k_5 ^4 -
        58443 k_3^4 k_4^{12}k_5 ^4 + 763216 k_3^5 k_4^{12}k_5 ^4 -
        3156992 k_3^6 k_4^{12}k_5 ^4 + 4063232 k_3^7 k_4^{12}k_5 ^4 +
        26670 k_3^4 k_4^{13}k_5 ^4 - 242592 k_3^5 k_4^{13}k_5 ^4 +
        532480 k_3^6 k_4^{13}k_5 ^4 - 6192 k_3^4 k_4^{14}k_5 ^4 +
        29696 k_3^5 k_4^{14}k_5 ^4 + 544 k_3^4 k_4^{15}k_5 ^4 +
        68 k_3^5 k_4^5 k_5 ^5 - 1344 k_3^6 k_4^5 k_5 ^5 +
        5632 k_3^7 k_4^5 k_5 ^5 - 1655 k_3^5 k_4^6 k_5 ^5 +
        30784 k_3^6 k_4^6 k_5 ^5 - 168192 k_3^7 k_4^6 k_5 ^5 +
        241664 k_3^8 k_4^6 k_5 ^5 + 11838 k_3^5 k_4^7 k_5 ^5 -
        215104 k_3^6 k_4^7 k_5 ^5 + 1296256 k_3^7 k_4^7 k_5 ^5 -
        2836480 k_3^8 k_4^7 k_5 ^5 + 1441792 k_3^9 k_4^7 k_5 ^5 -
        39681 k_3^5 k_4^8 k_5 ^5 + 671152 k_3^6 k_4^8 k_5 ^5 -
        3906240 k_3^7 k_4^8 k_5 ^5 + 9007104 k_3^8 k_4^8 k_5 ^5 -
        6291456 k_3^9 k_4^8 k_5 ^5 + 71812 k_3^5 k_4^9 k_5 ^5 -
        1062240 k_3^6 k_4^9 k_5 ^5 + 5312896 k_3^7 k_4^9 k_5 ^5 -
        10094592 k_3^8 k_4^9 k_5 ^5 + 5505024 k_3^9 k_4^9 k_5 ^5 -
        73618 k_3^5 k_4^{10}k_5 ^5 + 878128 k_3^6 k_4^{10}k_5 ^5 -
        3235072 k_3^7 k_4^{10}k_5 ^5 + 3538944 k_3^8 k_4^{10}k_5 ^5 +
        42216 k_3^5 k_4^{11}k_5 ^5 - 356768 k_3^6 k_4^{11}k_5 ^5 +
        708608 k_3^7 k_4^{11}k_5 ^5 - 12388 k_3^5 k_4^{12}k_5 ^5 +
        55296 k_3^6 k_4^{12}k_5 ^5 + 1408 k_3^5 k_4^{13}k_5 ^5 -
        258 k_3^6 k_4^4 k_5 ^6 + 3168 k_3^7 k_4^4 k_5 ^6 -
        7680 k_3^8 k_4^4 k_5 ^6 + 3514 k_3^6 k_4^5 k_5 ^6 -
        47840 k_3^7 k_4^5 k_5 ^6 + 179456 k_3^8 k_4^5 k_5 ^6 -
        147456 k_3^9 k_4^5 k_5 ^6 - 17641 k_3^6 k_4^6 k_5 ^6 +
        239952 k_3^7 k_4^6 k_5 ^6 - 1016896 k_3^8 k_4^6 k_5 ^6 +
        1327104 k_3^9 k_4^6 k_5 ^6 - 262144 k_3^{10}k_4^6 k_5 ^6 +
        43696 k_3^6 k_4^7 k_5 ^6 - 548304 k_3^7 k_4^7 k_5 ^6 +
        2182528 k_3^8 k_4^7 k_5 ^6 - 2856960 k_3^9 k_4^7 k_5 ^6 +
        524288 k_3^{10}k_4^7 k_5 ^6 - 58519 k_3^6 k_4^8 k_5 ^6 +
        614848 k_3^7 k_4^8 k_5 ^6 - 1908736 k_3^8 k_4^8 k_5 ^6 +
        1622016 k_3^9 k_4^8 k_5 ^6 + 42672 k_3^6 k_4^9 k_5 ^6 -
        325824 k_3^7 k_4^9 k_5 ^6 + 569344 k_3^8 k_4^9 k_5 ^6 -
        15704 k_3^6 k_4^{10}k_5 ^6 + 64512 k_3^7 k_4^{10}k_5 ^6 +
        2240 k_3^6 k_4^{11}k_5 ^6 + 416 k_3^7 k_4^3 k_5 ^7 -
        3392 k_3^8 k_4^3 k_5 ^7 + 4608 k_3^9 k_4^3 k_5 ^7 -
        4039 k_3^7 k_4^4 k_5 ^7 + 39008 k_3^8 k_4^4 k_5 ^7 -
        89856 k_3^9 k_4^4 k_5 ^7 + 32768 k_3^{10}k_4^4 k_5 ^7 +
        15354 k_3^7 k_4^5 k_5 ^7 - 149840 k_3^8 k_4^5 k_5 ^7 +
        397568 k_3^9 k_4^5 k_5 ^7 - 221184 k_3^{10}k_4^5 k_5 ^7 -
        28653 k_3^7 k_4^6 k_5 ^7 + 249568 k_3^8 k_4^6 k_5 ^7 -
        588544 k_3^9 k_4^6 k_5 ^7 + 278528 k_3^{10}k_4^6 k_5 ^7 +
        27662 k_3^7 k_4^7 k_5 ^7 - 183712 k_3^8 k_4^7 k_5 ^7 +
        260096 k_3^9 k_4^7 k_5 ^7 - 13108 k_3^7 k_4^8 k_5 ^7 +
        48128 k_3^8 k_4^8 k_5 ^7 + 2368 k_3^7 k_4^9 k_5 ^7 -
        335 k_3^8 k_4^2 k_5 ^8 + 1696 k_3^9 k_4^2 k_5 ^8 -
        1024 k_3^{10}k_4^2 k_5 ^8 + 2642 k_3^8 k_4^3 k_5 ^8 -
        16256 k_3^9 k_4^3 k_5 ^8 + 16896 k_3^{10}k_4^3 k_5 ^8 -
        7849 k_3^8 k_4^4 k_5 ^8 + 49792 k_3^9 k_4^4 k_5 ^8 -
        61184 k_3^{10}k_4^4 k_5 ^8 + 10886 k_3^8 k_4^5 k_5 ^8 -
        57440 k_3^9 k_4^5 k_5 ^8 + 55296 k_3^{10}k_4^5 k_5 ^8 -
        7008 k_3^8 k_4^6 k_5 ^8 + 21760 k_3^9 k_4^6 k_5 ^8 +
        1664 k_3^8 k_4^7 k_5 ^8 + 132 k_3^9 k_4 k_5 ^9 - 320 k_3^{10}k_4 k_5 ^9 -
        940 k_3^9 k_4^2 k_5 ^9 + 2752 k_3^{10}k_4^2 k_5 ^9 +
        2228 k_3^9 k_4^3 k_5 ^9 - 6976 k_3^{10}k_4^3 k_5 ^9 -
        2156 k_3^9 k_4^4 k_5 ^9 + 4864 k_3^{10}k_4^4 k_5 ^9 +
        736 k_3^9 k_4^5 k_5 ^9 - 20 k_3^{10}k_5 ^{10}+ 144 k_3^{10}k_4 k_5 ^{10}-
        284 k_3^{10}k_4^2 k_5 ^{10}+ 160 k_3^{10}k_4^3 k_5 ^{10}))/(k_4^2 (-k_4 +
        k_3 k_5 ) (-k_4^2 + 8 k_3 k_4^2 + k_4^3 - k_3 k_5  +
        k_3 k_4 k_5 )^3 (-k_4^2 - 4 k_4^3 + 32 k_3 k_4^3 + 4 k_4^4 -
        4 k_3 k_4 k_5  + 4 k_3 k_4^2 k_5  + 4 k_3^2 k_5 ^2) (3 k_4^6 -
        48 k_3 k_4^6 + 192 k_3^2 k_4^6 + 6 k_3 k_4^4 k_5  - 48 k_3^2 k_4^4 k_5  -
        6 k_3 k_4^5 k_5  + 48 k_3^2 k_4^5 k_5  + 3 k_3^2 k_4^2 k_5 ^2 -
        12 k_3^2 k_4^3 k_5 ^2 + 48 k_3^3 k_4^3 k_5 ^2 + 3 k_3^2 k_4^4 k_5 ^2 +
        8 k_3^2 k_4^5 k_5 ^2 - 64 k_3^3 k_4^5 k_5 ^2 + 16 k_3^2 k_4^6 k_5 ^2 -
        6 k_3^3 k_4 k_5 ^3 + 6 k_3^3 k_4^2 k_5 ^3 + 8 k_3^3 k_4^3 k_5 ^3 -
        8 k_3^3 k_4^4 k_5 ^3 + 3 k_3^4 k_5 ^4 - 8 k_3^4 k_4^2 k_5 ^4 +
        48 k_3^4 k_4^4 k_5 ^4))
$}}
 \section*{Appendix 2}
{{
$\begin{aligned}
g_2 = &- \big(2936684087935203 + 1903329794381968962 k_5  +
       526356269068940036058 k_5 ^2 + \\
      & 79212661161468359120682 k_5 ^3 +
       6624774681223327162056312 k_5 ^4 + \\
      & 226210368780713535116189198 k_5 ^5 -
       11069401291544927561200467090 k_5 ^6 - \\
      & 1681061474736283778980266837290 k_5 ^7 -
       88270101979612138277063002374075 k_5 ^8 - \\
      & 2477699412162830370947947313822600 k_5 ^9 -
       36765039251191521107996938881205000 k_5 ^{10}- \\
      & 225936898431835263136354705676085000 k_5 ^{11}+
       135558039338577164252680767477500000 k_5 ^{12}+ \\
      & 4225634014554480900089535196731000000 k_5 ^{13}-
       7447510173322764423476685804685000000 k_5 ^{14}- \\
      & 55286614599318489486752585449650000000 k_5 ^{15}+
       251173978392196043283302794468500000000 k_5 ^{16}- \\
      & 307896287935124803285928790000000000 k_5 ^{17}-
       2169631665992185188025531440150000000000 k_5 ^{18}+ \\
      & 5262165240502910370909490745000000000000 k_5 ^{19}-
       2123588888489564564273413665000000000000 k_5 ^{20}- \\
      & 13651336452235596514164796000000000000000 k_5 ^{21}+
       35407308409324280850690466000000000000000 k_5 ^{22}- \\
      & 47646890714938134580571205000000000000000 k_5 ^{23} +
       43901452243104299918553400000000000000000 k_5 ^{24} - \\
      & 31151597239468216641843000000000000000000 k_5 ^{25} +
       18249194021574604996285000000000000000000 k_5 ^{26} - \\
      & 8792944539548463715050000000000000000000 k_5 ^{27} +
       3137409734404532590500000000000000000000 k_5 ^{28} - \\
      & 621925208863774160000000000000000000000 k_5 ^{29} -
       40840825018999950000000000000000000000 k_5 ^{30} + \\
      & 64682248399281000000000000000000000000 k_5 ^{31} -
       17749240540020000000000000000000000000 k_5 ^{32} + \\
      & 2159429746500000000000000000000000000 k_5 ^{33} -
       94431744000000000000000000000000000 k_5 ^{34}\big)/ \\
      & \big(36000 (-1 +
         k_5 )^3 (1 + 90 k_5 )^5 (-13 - 45 k_5  + 50 k_5 ^2)^2 (-401 -
         90 k_5  + 100 k_5 ^2)\\
      & (98209 + 739620 k_5  - 748900 k_5 ^2 -
         162000 k_5 ^3 + 90000 k_5 ^4) \\
      & (-5 - 675 k_5  - 29643 k_5 ^2 -
         389205 k_5 ^3 + 1470264 k_5 ^4 - 1661040 k_5 ^5 +
         610560 k_5 ^6) \\
      & (3 + 540 k_5  + 30683 k_5 ^2 + 561150 k_5 ^3 -
         34888 k_5 ^4 - 2005920 k_5 ^5 + 1462400 k_5 ^6)\big)
\end{aligned}$ }}

%\end{multicols}

\begin{thebibliography}{99}
\bibitem[Bibikov(1979)]{Bib} Bibikov, Y. N. [1979] {\it Local Theory of Nonlinear Analytic Ordinary Differential Equations.} {\bf 702}. (Springer-Verlag, New York).

\bibitem[Bonin \& Legault(1988)]{BL} Bonin, G. \& Legault, Y. [1988] ``Comparison de la methode des constants de Lyapunov et de la bifurcation de Hopf,'' {\it Canad. Math. Bull.} {\bf 31}, 200--209.

\bibitem[Chicone(1999)]{Chi} Chicone, C. [1999] {\it  Ordinary Differential Equations with Applications.} (Springer-Verlag, New York).

\bibitem[Collins(1975)]{Col} Collins, G. E. [1975] ``Quantifier elimination for the elementary theory of real closed fields by cylindrical algebraic decomposition,'' {\it Second GI Conference, Automata Theory and Formal Languages, LNCS}, {\bf 33}, 134-183.

\bibitem[Cox et al.(1992)]{Cox} Cox, D., Little, J. \& O'Shea, D. [1992] {\it Ideals, Varieties,and Algorithms.} (Springer-Verlag, New York).

\bibitem[Errami et al.(2013)]{E} Errami, H., Eiswirth, M., Grigoriev, D., Seiler, W. M., Sturm, T. \& Weber, A. [2013] ``Efficient methods to compute Hopf bifurcations in chemical reaction networks using reaction coordinates,'' {\it 15th International Workshop on Computer Algebra in Scientific Computing, CASC 2013, LNCS}, {\bf 8136}, 88--99.

\bibitem[Errami et al.(2015)]{E1} Errami, H., Eiswirth, M., Grigoriev, D., Seiler, W. M., Sturm, T. \& Weber, A. [2015] ``Detection of Hopf Bifurcations in Chemical Reaction Networks Using Convex Coordinates,'' {\it J. Comput. Phys.}  {\bf 291}, 279--302.

\bibitem[Erdi\& T\'oth(1989)]{Erdi} \'Erdi, P. \& T\'oth, J. [1989] {\it Mathematical Models of Chemical Reactions. Theory and Applications of Deterministic and Stochastic Models.} (Manchester University Press, Princeton University Press, Manchester, Princeton).

\bibitem[Feinberg(1987)]{F} Feinberg, M. [1987] ``Chemical reaction network structure and the stability of complex isothermal reactors: I. The deficiency zero and deficiency one theorems,'' {\it Chem. Eng. Sci.} {\bf 42}, 2229--2268.

\bibitem[Farr et al.(1989)]{Farr} Farr, W.W., Li, C., Labouriau, I.S. \& Langford, W.F. [1989] ``Degenerate Hopf bifurcation formulas and Hilbert's 16th problem,'' {\it Siam. J. Math. Anal.} {\bf 20}, 13--30.

\bibitem[Guckenheimer \& Holmes(1990)]{GH} Guckenheimer, J. \& Holmes, P. [1990] {\it Nonlinear Oscillations,  Dynamical Systems and Bifurcation of Vectorfields}. (Springer, New York).

\bibitem[Kuznetsov(1995)]{Kuz} Kuznetsov, Y. A. [1995] {\it Elements of Applied Bifurcation Theory}. (Springer, New York).

\bibitem[Li et al.(2008)]{Li} Li, Y., Qian, H. \& Yi, Y. [2008]  ``Oscillations and multiscale dynamics in a closed chemical reaction system: Second law of thermodynamics and temporal complexity,'' {\it The Journal of Chemical Physics} {\bf 129}, 154505.


\bibitem[Niu \& Wang(2008)]{NW} Niu, W. \& Wang, D. [2008] ``Algebraic Approaches to Stability Analysis of Biological Systems,'' {\it Mathematics in Computer Science} {\bf 1}, 507--539.

\bibitem[Niu \& Wang(2012)]{NW1} Niu, W. \& Wang, D. [2008] ``Algebraic analysis of stability and bifurcation of a self-assembling micelle system,'' {\it Appl. Math. Comput.} {\bf 219}, 108--121.

\bibitem[Pliss(1964)]{P} Pliss, V. A. [1964] ``A Reduction Principle in the Theory of Stability of Motion,'' {\it Izv. Akad. Nauk SSSR, Ser. Mat.} {\bf 28}, 1297-¨C1324.

\bibitem[Reidl et al.(2006)]{RBS} Reidl, J., Borowski, P., Sensse, A., Starke, J., Zapotocky, M. \& Eiswirth, M. [2006]  ``Model of Calcium Oscillations Due to Negative Feedback in Olfactory Cilia,'' {\it Biophys. J.} {\bf 90}, 1147--1155.

\bibitem[Romanovski \& Shafer(2009)]{RS} Romanovski, V. G. \& Shafer, D. S. {\it The Center and Cyclicity Problems: A Computational Algebra Approach.} (Boston-Basel-Berlin, Birkhauser).

\bibitem[Songling(1980)]{S} Songling, S. [1980]  ``A concrete example of the existence of four limit cycles for planar quadratic systems,'' {\it Sci. Sinica Ser. A} {\bf 23}, 153--158.

\bibitem[Sijbrand(1985)]{Sij} Sijbrand, J. [1985] ``Properties of center manifolds,'' {\it Trans. Amer. Soc.} {\bf 289}, 431--469.

\bibitem[Sturm et al.(2009)]{SturmWeberEtAl2009a} Sturm, T., Weber, A., Abdel-Rahman, E. O. \& {El Kahoui}, M. [2009] ``Investigating algebraic and logical algorithms to solve {Hopf} bifurcation problems in algebraic biology,'' {\it Mathematics in Computer Science} {\bf 2}, 493--515.

\end{thebibliography}
\end{document}